\newtheorem{theorem}{Theorem}
\newtheorem{corollary}[theorem]{Corollary}
\newtheorem{lemma}[theorem]{Lemma}
\newtheorem{remark}[theorem]{Remark}
\def\Z{\mathbb{Z}}
\def\Q{\mathbb{Q}}
\def\OK{\mathcal{O}_K}
\def\eps{\varepsilon}
\DeclareMathOperator{\Disc}{Disc}
\DeclareMathOperator{\Aut}{Aut}
\title{Counting quintic fields with genus number one}
\author{Kevin J.~McGown \and Frank Thorne \and Amanda Tucker}
\begin{document}

\maketitle

\section{Introduction}

The genus field of a number field $K$ is defined to be the maximal extension $K^*$ of $K$
that is unramified at all finite primes and is a compositum of the form $Kk^*$
where $k^*$ is absolutely abelian.  The genus number is defined as $g_K=[K^*:K]$.
It follows immediately from class field theory that $g_K$ divides the narrow class number $h_K^+$.
See~\cite{mcgown.tucker} for a slightly more detailed introduction
and~\cite{ishida} for a comprehensive account of genus fields.

One may consider the density
of genus number one fields among all number fields of a fixed degree and signature,
ordered by their discriminants.  It essentially follows from a classical theorem of Gauss
on quadratic forms
that $0\%$ of quadratic fields have genus number one.\footnote{Indeed, Gauss proves (in the language of quadratic forms) that when $K$ is quadratic, $g_K=2^{t-1}$ where $t$ is the number of prime divisors of $\Disc(K)$, and moreover, that $g_K$ in fact equals the $2$-part of $h_K^+$ in that situation.  It was Hasse who first reproved Gauss' result in the language of class field theory, by considering the genus field in the quadratic setting.}
On the other hand,
McGown and Tucker (\cite{mcgown.tucker}) proved that a positive proportion (roughly $96.23\%$)
of cubic fields have genus number one.  Due to subtleties that arise in the quartic case,
we will temporarily put degree four fields aside and consider this 
problem in the quintic case.

Let $\mathcal{F}$ denote the collection of all quintic fields $K$,
and $\mathcal{G}$ denote the collection of all quintic fields with $g_K=1$.
For $i=0,1,2$,
write $\mathcal{F}^{(i)}$ and $\mathcal{G}^{(i)}$ to denote the subsets of $\mathcal{F}$
and $\mathcal{G}$, respectively, consisting of fields with precisely~$i$ pairs of complex embeddings.
Define $N^{(i)}(X)=\#\{K\in\mathcal{F}^{(i)}:|\Disc(K)|\leq X\}$ and
$N_g^{(i)}(X)=\#\{K\in\mathcal{G}^{(i)}:|\Disc(K)|\leq X\}$.
Bhargava proved (see Theorem~1 of~\cite{bhargava.quintic}, pg.~1559) that
$N^{(i)}(X)\sim C^{(i)}X$
where
\[
  C^{(i)}=\prod_p(1+p^{-2}-p^{-4}-p^{-5})\cdot
  \begin{cases}
  \frac{1}{240} & \text{if $r=0$}\\  \frac{1}{24} & \text{if $r=1$} \\  \frac{1}{16} & \text{if $r=2$}\,.
  \end{cases}
\]

Our main result concerning counting genus number one fields is the following:
\begin{theorem}\label{T:1}
\[
  N_g^{(i)}(X)=\left(C^{(i)}\frac{506874}{506875}\prod_{p\equiv 1\pmod{5}} \frac{p^4+p^3+2p^2+2p}{p^4+p^3+2p^2+2p+1} \right)X + O(X^{1-\frac{1}{400} + \eps})
  \,.
\]
%where
%\begin{align*}
%  D&=\prod_{p\equiv 1\pmod{5}} \left( \frac{p^4+p^3+2p^2+2p}{p^4+p^3+2p^2+2p+1} \right)
%  \,.
%\end{align*}
\end{theorem}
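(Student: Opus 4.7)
The plan is to rewrite the condition $g_K=1$ as a conjunction of local conditions on the completions $K\otimes\mathbb{Q}_p$ and then apply Bhargava's count of quintic fields with prescribed local behavior. By class field theory, $g_K>1$ if and only if there is an abelian extension $k^*/\mathbb{Q}$ not contained in $K$ with $Kk^*/K$ unramified at every finite prime. Since $[K:\mathbb{Q}]=5$ is prime and the abelian quintic $K$ contribute only $O(X^{1/4})$ and are negligible, $\Gal(k^*/\mathbb{Q})$ is an elementary abelian $5$-group, and by Kronecker--Weber $k^*$ is a compositum of the unique quintic subfields $L_p\subset\mathbb{Q}(\zeta_p)$ for primes $p\equiv 1\pmod 5$ and the unique quintic subfield $L_5\subset\mathbb{Q}(\zeta_{25})$. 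A local analysis via local class field theory---the character cutting out $KL_p/K$ at a prime $\mathfrak{P}$ above $p$ is $\chi\circ N_{K_{\mathfrak{P}}/\mathbb{Q}_p}$, where $\chi$ corresponds to $L_p/\mathbb{Q}_p$---reduces ``$KL_p/K$ unramified above $p$'' to the condition that $p$ be totally ramified in $K$ when $p\equiv 1\pmod 5$, and to a more delicate condition involving the action on higher units when $p=5$. Consequently, $g_K=1$ if and only if $K\otimes\mathbb{Q}_p$ avoids an explicit set of ``bad'' \'etale algebras at each $p\in\{5\}\cup\{p:p\equiv 1\pmod 5\}$.

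Next I would invoke a version of Bhargava's count with local conditions,
\[
\#\{K\in\mathcal{F}^{(i)}:|\Disc K|\leq X,\ K\otimes\mathbb{Q}_p\in\Sigma_p\ (p\in S)\}=C^{(i)}X\prod_{p\in S}\lambda_p(\Sigma_p)+O\!\Bigl(X^{1-\delta+\eps}\prod_{p\in S}p^{A}\Bigr),
\]
for absolute $\delta,A>0$, obtained from Bhargava's method together with current power-saving inputs. A tabulation of quintic \'etale $\mathbb{Q}_p$-algebras yields, for the condition ``$p$ not totally ramified,''
\[
\lambda_p=\frac{p^4+p^3+2p^2+2p}{p^4+p^3+2p^2+2p+1}\qquad(p\equiv 1\pmod 5),
\]
because the totally ramified contribution has mass $(p-1)/p^5$ relative to the total local density $1+p^{-2}-p^{-4}-p^{-5}=(p-1)(p^4+p^3+2p^2+2p+1)/p^5$. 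A more intricate 5-adic computation produces $\lambda_5=506874/506875$. To reach the full infinite product, I use a tail bound: a prime $p$ totally ramified in $K$ contributes $p^4$ to $|\Disc K|$, so only $p\leq X^{1/4}$ occur; summing the uniform estimate $\#\{K\in\mathcal{F}^{(i)}:p^4\mid |\Disc K|\leq X\}\ll Xp^{-4+\eps}$ over $p>Y$ gives a tail of size $O(XY^{-3+\eps})$. Balancing $Y^{A}X^{1-\delta}$ against $XY^{-3}$ and optimizing $Y$ yields the stated error $O(X^{1-1/400+\eps})$.

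The most delicate step is the analysis at $p=5$: because $L_5/\mathbb{Q}_5$ is wildly ramified, the criterion cannot be reduced to a pure inertia-index statement, and one must enumerate the finitely many quintic \'etale $\mathbb{Q}_5$-algebras (especially the wildly ramified ones) and compute the image of the local norm map on higher unit groups in each case. Matching the resulting 5-adic density against the relevant slice of Bhargava's $p=5$ mass produces the numerically precise factor $1/506875$. A secondary obstacle is obtaining the power-saving Bhargava count with enough uniformity in the set $S$ of local conditions so that the balancing in the final step actually delivers the exponent $1/400$.
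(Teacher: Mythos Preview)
Your outline matches the paper's approach: Ishida's genus theory reduces $g_K=1$ to local conditions at $p\equiv 0,1\pmod 5$, one sieves via the Ellenberg--Pierce--Wood refinement of Bhargava's count, and a tail estimate plus balancing yields the error term. Two points need correction.

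The uniform bound $\#\{K\in\mathcal{F}^{(i)}:p^4\mid\Disc K,\ |\Disc K|\le X\}\ll Xp^{-4+\eps}$ that you invoke for the tail is not available in the literature for $p$ beyond the range where the EPW error term is under control. The paper instead uses the Bhargava--Cojocaru--Thorne tail estimate, which only exploits the condition $q^2\mid\Disc K$ for some squarefree $q>Y$ and gives $\ll X^{39/40+\eps}+X/Y^{1-\eps}$. With that weaker tail the balance is $YX^{199/200+\eps}$ against $X/Y$, which produces $Y=X^{1/400}$ and exactly the stated exponent. Your $XY^{-3}$ tail, were it valid, would yield an exponent strictly better than $1-1/400$, so the balancing you describe does not actually recover the claimed error; replacing it by the BCT bound fixes this.

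At $p=5$ the paper makes the computation explicit rather than leaving it as a black box: Ishida's higher-unit norm condition $(\star)$, checked against the Jones--Roberts table of ramified quintic extensions of $\Q_5$, singles out precisely the five totally ramified \emph{Galois} extensions, each of discriminant $5^8$, with combined mass $5^{-8}$. Since $5^8 m(5)=506875$ this gives $\lambda_5=1-1/506875=506874/506875$ directly.
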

\begin{corollary}
  The proportion of quintic fields with $i$ pairs of complex embeddings having genus number one equals
  \[
  \frac{506874}{506875}
  \prod_{p\equiv 1\pmod{5}} \frac{p^4+p^3+2p^2+2p}{p^4+p^3+2p^2+2p+1}
  \,.
  \]
\end{corollary}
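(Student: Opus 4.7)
The plan is to deduce the Corollary immediately from Theorem~\ref{T:1}, together with Bhargava's asymptotic $N^{(i)}(X)\sim C^{(i)}X$ recalled in the excerpt. By definition, the proportion of quintic fields with $i$ complex embeddings having genus number one, ordered by absolute discriminant, is the limit
\[
  \lim_{X\to\infty}\frac{N_g^{(i)}(X)}{N^{(i)}(X)}\,,
\]
provided that this limit exists. So the task reduces to showing that this ratio converges to the stated product.

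First I would substitute the main-term expression from Theorem~\ref{T:1} into the numerator and Bhargava's asymptotic into the denominator. The signature-dependent constant $C^{(i)}$ cancels from the ratio, leaving precisely the product
\[
  \frac{506874}{506875}\prod_{p\equiv 1\pmod{5}}\frac{p^4+p^3+2p^2+2p}{p^4+p^3+2p^2+2p+1}
\]
claimed in the statement. This cancellation is the entire content of the argument, and it explains structurally why the answer is independent of the signature $i$: the signature enters only through $C^{(i)}$, which governs both $\mathcal{F}^{(i)}$ and $\mathcal{G}^{(i)}$ in the same way.

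The only technical point requiring verification is that the error term in Theorem~\ref{T:1} is genuinely negligible compared to the main term of $N^{(i)}(X)$. Since $N^{(i)}(X)=C^{(i)}X+o(X)$ and the error term in the numerator is $O(X^{1-\frac{1}{400}+\eps})$, dividing through yields a correction of order $O(X^{-\frac{1}{400}+\eps})$, which tends to zero as $X\to\infty$. Hence the limit exists and equals the announced product.

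I do not anticipate any serious obstacle: the corollary is a formal consequence of Theorem~\ref{T:1} and Bhargava's asymptotic, and all of the genuine difficulty is concentrated in the proof of the theorem itself (in particular, in controlling the local conditions at primes $p\equiv 1\pmod{5}$ that produce the non-trivial Euler factor, and in establishing the power-saving error term strong enough to make this deduction clean).
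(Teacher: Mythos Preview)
Your proposal is correct and matches the paper's approach: the Corollary is stated without proof immediately after Theorem~\ref{T:1}, being an immediate consequence of dividing the asymptotic of Theorem~\ref{T:1} by Bhargava's asymptotic $N^{(i)}(X)\sim C^{(i)}X$, exactly as you describe.
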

Although the number above (which is independent of $i$ and approximately equal to $0.999935$) is quite close to $1$, this implies
that a positive proportion of quintic fields have class number divisible by $5$.
(Note that since $g_K$ divides the narrow class number and $g_K$ is a power of $5$,
we also have that $g_K$ divides the class number.)
In fact, our methods yield the following stronger result:
\begin{theorem}\label{T:2}
Given any $k\geq 0$, there is a positive proportion of quintic fields with
genus number equal to $5^k$, and hence with class number divisible by $5^k$.
\end{theorem}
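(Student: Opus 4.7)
The plan is to realize $g_K = 5^k$ by prescribing exactly $k$ primes that contribute to the genus number, then counting via the same machinery used for Theorem~\ref{T:1}. The foundation is a local decomposition $g_K = \prod_p g_{K,p}$ in which each $g_{K,p} \in \{1, 5\}$ is controlled by a local ramification condition at $p$: for $p \equiv 1 \pmod 5$ with $p \neq 5$, $g_{K,p} = 5$ precisely when $K$ is (tamely) totally ramified at $p$ with inertia matching the unique $\Z/5\Z$-subfield of $\Q(\zeta_p)$, and an analogous wild-ramification condition at $p = 5$ governs $g_{K,5}$. These are exactly the local conditions whose densities are implicitly computed in Theorem~\ref{T:1}: the factor $\frac{p^4+p^3+2p^2+2p}{p^4+p^3+2p^2+2p+1}$ is the conditional density of $g_{K,p} = 1$ at each $p \equiv 1 \pmod 5$, and $\frac{506874}{506875}$ plays the analogous role at $p = 5$.

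Now I would fix $k \geq 1$ and choose any $k$ distinct primes $p_1, \ldots, p_k \equiv 1 \pmod 5$. Letting $\mathcal{H}_S^{(i)}(X)$ denote the count of $K \in \mathcal{F}^{(i)}$ with $|\Disc(K)| \leq X$, $g_{K,p_j} = 5$ for each $j$, and $g_{K,p} = 1$ at every other prime $p$, every such $K$ satisfies $g_K = 5^k$. Rerunning the proof of Theorem~\ref{T:1} verbatim, with the Euler factor at each $p_j$ replaced by the complementary density $\frac{1}{p_j^4+p_j^3+2p_j^2+2p_j+1}$, should yield
\[
\mathcal{H}_S^{(i)}(X) = c_S^{(i)} X + O\bigl(X^{1-1/400+\eps}\bigr)
\]
for some $c_S^{(i)} > 0$; this produces a positive proportion of quintic fields with genus number exactly $5^k$.

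The main obstacle is the first step: explicitly identifying $g_{K,p} = 5$ as a local condition in Bhargava's parametrization of quintic rings by pairs of quinary alternating forms. This identification is already implicit in the proof of Theorem~\ref{T:1}, since the Euler factors displayed there were computed precisely as local densities of the complementary condition $g_{K,p} = 1$, so the adaptation should be essentially mechanical. With the local description in hand, the counting step requires no new analytic input: the same sieve and tail estimates that establish Theorem~\ref{T:1} immediately give the asymptotic above, and the fact that class number divisibility follows from genus divisibility is noted in the discussion preceding the theorem.
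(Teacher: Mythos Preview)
Your approach is essentially identical to the paper's: fix $k$ primes $p_1,\dots,p_k\equiv 1\pmod 5$, impose total ramification at each, sieve to forbid genus contributions from any other prime, and rerun the Theorem~\ref{T:1} argument to obtain a positive-density lower bound. One simplification: by Ishida's theorem (Theorem~\ref{T:ishida}), the condition $g_{K,p}=5$ at a prime $p\equiv 1\pmod 5$ is \emph{exactly} ``$p$ is totally ramified in $K$'' with no further inertia-matching requirement, so the ``main obstacle'' you flag is already handled by the splitting-type conditions in Theorem~\ref{T:counting} and needs no new identification inside Bhargava's parametrization.
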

We also prove the following:
\begin{theorem}\label{T:3}
The average of the genus number taken over all quintic fields of a given signature is finite.
In particular, we have
$$
\lim_{X\to\infty}
\frac{\sum_{\substack{K\in\mathcal{G}^{(i)} \\|\Disc(K)|\leq X}} g_K}{\sum_{\substack{K\in\mathcal{G}^{(i)} \\|\Disc(K)|\leq X}} 1}
=
%\left(1 + \frac{4}{5^{8}m(5)} \right)
%\prod_{ p \equiv 1 \pmod{5}} \left( 1+4m(p)^{-1}p^{-4} \right)
\frac{506879}{506875}
\prod_{ p \equiv 1 \pmod{5}}
\frac{p^4 + p^3 + 2p^2 + 2p + 5}{p^4 + p^3 + 2p^2 + 2p + 1}
\,.
$$
\end{theorem}
Numerically, this constant is $1.00026\dots$, but the main point is that the limit exists.
%Though this number is very close to $1$, the point is that the limit exists.
%see \eqref{eq:average_genus} for an exact expression.

Late in the preparation of this paper, we became aware of the work of Kim (see~\cite{kim}) on
 statistical questions concerning the genus number in cyclic and dihedral extensions of prime degree, in which he says ``it seems very difficult to compute genus numbers of [$S_5$] fields''.
All of the results in this paper hold, with essentially identical proofs, when restricted to $S_5$-quintic fields only. 
As such, our results establish that a variety of statistics can indeed be computed for genus numbers of such fields.

Finally, we also show the following:
\begin{theorem}\label{T:4}
A positive proportion of quintic fields with genus number one fail to be norm-Euclidean.
\end{theorem}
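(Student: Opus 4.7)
The plan is to reduce Theorem~\ref{T:4} to the asymptotic of Theorem~\ref{T:1} combined with a classical finiteness theorem for norm-Euclidean number fields. In every signature $(r_1,r_2)\in\{(5,0),(3,1),(1,2)\}$ a quintic field has unit rank $r_1+r_2-1\ge 2$. A classical theorem (going back to Heilbronn and Davenport) asserts that for each signature with unit rank at least~$2$, only finitely many number fields up to isomorphism are norm-Euclidean: the inhomogeneous minimum $M(K)$ of the norm form tends to infinity with $|\Disc(K)|$ within such a family, while norm-Euclideanness requires $M(K)<1$. In particular, for each $i\in\{0,1,2\}$, only finitely many $K\in\mathcal{F}^{(i)}$ are norm-Euclidean.

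Combining this finiteness with Theorem~\ref{T:1}, which gives $N_g^{(i)}(X)\gg X$, and noting that subtracting the bounded set of norm-Euclidean fields in $\mathcal{G}^{(i)}$ changes the count by $O(1)$, we conclude that a positive (indeed, asymptotically full) proportion of quintic fields of each signature with $g_K=1$ fail to be norm-Euclidean.

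If one wished to avoid citing the Heilbronn--Davenport finiteness theorem, the main obstacle would be to establish a direct quantitative obstruction via geometry of numbers. One natural route is to choose a small rational prime $p$ inert in $K$, a condition satisfied by a positive proportion of quintic fields by Chebotarev's theorem, and to then apply the Motzkin-style criterion: norm-Euclideanness forces every nonzero residue in $\OK/p\OK\cong\mathbb{F}_{p^5}$ to be represented by some $\beta\in\OK$ with $|N(\beta)|<p^5$. The number of residue classes so covered is bounded by the product of the size of the image of $\OK^{*}$ in $\mathbb{F}_{p^5}^{\times}$ and the number of principal ideals of $\OK$ of norm less than $p^5$; both quantities can be controlled via the class number formula in terms of $h_K R_K/\sqrt{|\Disc(K)|}$. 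For $|\Disc(K)|$ beyond an explicit bound this yields strictly fewer than $p^5-1$ covered classes, contradicting norm-Euclideanness. The technical difficulty in that approach would then be to make the counting unconditional and uniform, and to intersect the resulting local condition at $p$ with the sieve underlying Theorem~\ref{T:1} while preserving a positive proportion.
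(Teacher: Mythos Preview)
Your main argument rests on a ``classical theorem of Heilbronn and Davenport'' asserting that, in any signature with unit rank at least~$2$, only finitely many number fields are norm-Euclidean. No such general theorem is known. Davenport's results cover cubic fields, and Heilbronn's cover \emph{cyclic} fields of prime degree; for general quintic fields the finiteness of norm-Euclidean fields in a given signature remains open, and in particular the claim that the inhomogeneous minimum $M(K)$ tends to infinity with $|\Disc(K)|$ is not established. If that finiteness were available, your argument would in fact yield the much stronger conclusion that asymptotically \emph{all} genus-number-one quintic fields fail to be norm-Euclidean, which should already signal that something too strong is being invoked.

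The paper instead proceeds constructively, along the lines you sketch in your second paragraph but with a specific and clean choice of local data. One imposes the conditions that $2$ is inert, $5$ is inert, $7$ is totally ramified, and no prime $p\equiv 1\pmod 5$ is totally ramified. The last two conditions force $g_K=1$ via Ishida's criterion. For the Euclidean obstruction, let $\mathfrak{p}$ be the prime above $7$, so $N(\mathfrak{p})=7$; if $K$ were norm-Euclidean there would exist $\alpha\in\OK$ with $\alpha\equiv 4\pmod{\mathfrak{p}}$ and $|N(\alpha)|<7$, whence $N(\alpha)\equiv 4^5\equiv 2\pmod 7$ and so $N(\alpha)\in\{2,-5\}$, contradicting the inertness of $2$ and $5$. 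The same sieve machinery used for Theorem~\ref{T:1} then shows this set of local conditions has positive density. Your Motzkin-style outline points in this direction, but the key is the explicit choice of a totally ramified prime of small residue-field norm together with two inert primes that block the required small-norm representatives.
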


\section{Counting quintic fields with specified local completions}

Our proofs will apply two results on counting quintic fields. The first, essentially due to Ellenberg, Pierce, and Wood 
\cite[Theorem 5.1]{ellenberg.pierce.wood}, and building on Bhargava's work
\cite{bhargava.quintic},
is a result counting quintic fields
with a finite specified set of local conditions. 

By a {\itshape local condition} $\Sigma_p$ at a prime $p$, we mean
some fixed subset $\Sigma_p$ of the quintic \'etale algebras over $\Q_p$,
and we say that a quintic (number) field $K$ satisfies $\Sigma_p$ if $K\otimes \Q_p\in\Sigma_p$.
For squarefree $e$, 
by a set of local conditions $\Sigma \pmod{e}=(\Sigma_p)_{p \mid e}$, we mean a choice of local condition
$\Sigma_p$ for each prime $p \mid e$, and we say that $K$ satisfies $\Sigma$ if it satisfies each of the $\Sigma_p$.

To avoid a technical complication, we assume, for each $p \mid e$ other than $p = 5$, that $\Sigma_p$ corresponds to a
`splitting type' -- i.e., that $\Sigma_p$ consists of those algebras $K_{v_1} \times \dots \times K_{v_g}$ for which $g$, each
$e(K_{v_i}|\Q_p)$, and each $f(K_{v_i}|\Q_p)$ take a prescribed value. We consider such an algebra to be ramified if any of the
$K_{v_i}$ is ramified, and we 
write $e = e_1 e_2$ or $e = 5e_1 e_2$, where $e_1$ and $e_2$ are the products of those
primes $p \neq 5$ for which $\Sigma_p$ consists of unramified or ramified algebras, respectively.

Define the function
$$
  m(p)
  =
  1+p^{-1}+2p^{-2}+2p^{-3}+p^{-4}
  \,,
$$
which is the total mass of all quintic \'etale extensions of $\Q_p$.
Indeed, Bhargava
(see Theorem~1.1 of~\cite{bhargava.mass})
proves a generalization of Serre's mass formula (Th\'eor\`eme 2 of~\cite{serre})
which, specialized to the quintic case, yields
\[
  \sum_{[K:\Q_p]=5\text{ \'etale}}
  \frac{1}{\Disc_p(K)}\frac{1}{\#\Aut(K)}=m(p)
  \,.
\]
For the condition that $p$ not totally ramify, i.e.,
$$\Sigma_p=\{[F:\Q_p]=5 \text{ \'etale}\mid e(F/\Q_p)<5\}\,,$$
we have
\[
\sum_{F\in\Sigma_p}\frac{1}{\Disc_p(F)}\frac{1}{\#\Aut(F)} = 1+p^{-1}+2p^{-2}+2p^{-3}
\,,
\]
whereas for the condition that $p$ totally ramify the same sum is equal to $p^{-4}$, as originally proved by Serre. 

The following theorem is adapted from a result of
Ellenberg--Pierce--Wood (see Theorem~2.4 of~\cite{ellenberg.pierce.wood}).

\begin{theorem}%[Ellenberg--Pierce--Wood \cite{ellenberg.pierce.wood}]
%[Ellenberg--Pierce--Wood \cite[Theorem 5.1]{ellenberg.pierce.wood}]
\label{T:counting}
Let $N^{(i)}(X; \Sigma)$ be the number of quintic fields $K$ with $|\Disc(K)|<X$
having $i$ pairs of complex embeddings, and
satisfying a set of local conditions $\Sigma \pmod{e}$ with the restriction described above. Then, we have
\begin{align*}
  N^{(i)}(X;\Sigma)=C^{(i)}(\Sigma)
  X+O\left(e_1^{1/2} e_2 X^{79/80 + \eps} + X^{199/200 + \eps} \right),
\end{align*}
where 
\begin{align*}
C^{(i)}(\Sigma)&=C^{(i)}\prod_p C_p(\Sigma_p), \\
  C_p(\Sigma_p)
  &=
  m(p)^{-1}\sum_{F\in\Sigma_p}\frac{1}{\Disc_p(F)}\frac{1}{\#\Aut(F)}
  \,,
  \\
  m(p)
  &=
  1+p^{-1}+2p^{-2}+2p^{-3}+p^{-4}
  \,.
\end{align*}
\end{theorem}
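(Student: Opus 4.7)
The plan is to leverage Bhargava's parametrization of quintic rings and the uniform counting technique of Ellenberg, Pierce, and Wood. First I would set up the parametrization: isomorphism classes of quintic rings are in bijection with $GL_4(\Z)\times GL_5(\Z)$-orbits on the lattice $V(\Z)=\Z^4\otimes\wedge^2\Z^5$ of quadruples of $5\times 5$ integer skew-symmetric matrices, where the absolute discriminant of the ring matches (up to sign) the unique polynomial invariant of the action. The idea is then to count orbits of bounded discriminant by intersecting $V(\Z)$ with a judiciously chosen fundamental domain for the action on $V(\R)$, and applying Davenport's lemma together with Bhargava's estimates on the cuspidal region to convert the count into a volume computation plus a power-saving error term.

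To impose the local conditions $\mathcal{S}_p$, which by hypothesis are splitting-type conditions away from $p=5$, I would recast each $\mathcal{S}_p$ as a union of $p$-adic orbits defined by explicit congruences on the entries of the skew-symmetric matrices modulo a bounded power of $p$. Combining these congruences via the Chinese remainder theorem produces a single congruence modulo $e$, cutting $V(\Z)$ to a sublattice of index bounded polynomially in $e$. The geometry-of-numbers count on this sublattice gives a main term equal to $C^{(i)}X$ scaled by the product of local densities $C_p(\mathcal{S}_p)=m(p)^{-1}\sum_{F\in\Sigma_p}\Disc_p(F)^{-1}\#\Aut(F)^{-1}$, with an error whose dependence on $e$ must be tracked carefully. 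The asymmetry between the factors $e_1^{1/2}$ and $e_2$ reflects a genuine geometric feature: ramified conditions localize at a subvariety of positive codimension and contribute directly to the lattice index, whereas unramified conditions enjoy the square-root saving from equidistribution of lattice points in homogeneously expanding regions.

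The remaining, and hardest, step is the sieve to maximal orders, performed uniformly in $\mathcal{S}$. A ring in Bhargava's parametrization corresponds to a maximal order precisely when it is $p$-maximal at every prime $p$, and Bhargava's mass formula encodes $p$-maximality as a local condition on $V(\Z_p)$. Removing the non-maximal contribution naively introduces an error that depends badly on the sieve truncation; the Ellenberg--Pierce--Wood innovation is a tail sieve that bounds the number of quintic orders failing $p$-maximality at a large prime, delivering the uniform error $X^{199/200+\eps}$ that is independent of $e$. The principal obstacle is precisely this uniform tail sieve, since it demands quantitative control on the density of non-maximal quintic rings that remains robust under the imposition of arbitrary splitting conditions at primes in $e$. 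With this input in place, Theorem~\ref{T:counting} follows by combining the geometry-of-numbers main term, the local density computation assembled into $C^{(i)}(\mathcal{S})$, and the two independent error contributions.
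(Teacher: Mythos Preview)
Your overall architecture matches the paper's: follow the Ellenberg--Pierce--Wood argument, impose the local conditions via congruences on $V_\Z$, and sieve to maximal orders. But your account of \emph{why} the error term splits as $e_1^{1/2} e_2 X^{79/80+\eps}$ is wrong in a way that would derail the computation if you tried to carry it out.

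First, the congruence modulus is not $e$. For primes $p\mid e_1$ the unramified splitting type is detected modulo $p$, but for primes $p\mid e_2$ the splitting type $(1^5)$ is detected modulo $p$ while \emph{maximality} at $p$ is only detected modulo $p^2$; at $p=5$ one needs an unspecified power $5^k$. So the set $W_{q,e}$ (nonmaximal at $q$, maximal of the right type at $5e$) is defined by congruences modulo $q^2 e_1 e_2^2 5^k$, not modulo $q^2 e$. Second, there is no ``square-root saving from equidistribution'' in Davenport-type lattice point counts; the $1/2$ in $e_1^{1/2}$ has nothing to do with cancellation. It arises because invoking Bhargava's bound requires $q^2 e_1 e_2^2 \ll X^{1/40}$, which forces the sieve cutoff $Q = X^{1/80} e_1^{-1/2} e_2^{-1}$, and the exponent $1/2$ comes from solving for $q$ in the quadratic constraint. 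Third, the reason $e_2$ appears with exponent $1$ rather than $2$ is a density saving you omit: the local density $\delta_P$ of points satisfying a ramified condition is $\ll e_2^{-1}$, and this cancels one factor of $e_2$ in the error term $q^2 \delta_P e_1 e_2^2$.

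With these three corrections --- the modulus $e_1 e_2^2$, the cutoff $Q$ dictated by $q^2 e_1 e_2^2 \ll X^{1/40}$, and the density bound $\delta_P \ll e_2^{-1}$ --- the rest of the EPW analysis goes through unchanged and yields the stated error.
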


\begin{proof} The proof is essentially identical to that of \cite[Theorem 5.1]{ellenberg.pierce.wood}.
The explanation given there is quite thorough, and we will indicate only what needs to be changed.

As in \cite{ellenberg.pierce.wood}, the proof applies an inclusion-exclusion argument to points in the lattice
$V_{\Z} = \Z^{40}$ studied in \cite{bhargava.quintic}. For each squarefree integer $q$ coprime to $5e$, let
$W_{q, e} \subset V_\Z$ denote the set of elements corresponding to quintic rings that are:
nonmaximal at each prime dividing $q$; maximal and which satisfy the desired local conditions at primes dividing $5e$. 

Write $e_1$ for the product of primes $p \mid e$, excluding $5$, for which $\Sigma_p$ consists of unramified algebras,
and write $e_2$ for the analogous product where
$\Sigma_p$ consists of ramified algebras.

Then, $W_{q, e}$ is defined by congruence conditions modulo $q^2 e_1 e_2^2 5^k$, for some positive integer $k$.
This contrasts to the situation in \cite{ellenberg.pierce.wood},
where no special conditions modulo $5$ were imposed, and where the conditions modulo $e$ were all unramified. 
In the ramified case, the splitting type $(1^5)$ is defined $\pmod{e}$, as described in 
\cite[Section 12]{bhargava.quintic}, but the maximality condition is no longer automatic and this is defined only $\pmod{e^2}$.

The remainder of the analysis remains essentially unchanged. The error term of \cite[(5.3)]{ellenberg.pierce.wood} includes an error term $q^2 \delta_P e$,
and $e$ must be replaced by $e_1 e_2^2$ as described above. (The $5^k$ term contributes to the implied constant, and we may
ignore this contribution.) We now have $\delta_P \ll e_2^{-1}$, reflecting the fact that the fields being counted are rare. 
Finally, in invoking \cite[(27)]{bhargava.quintic}, it must be assumed that $q^2 e_1 e_2^2 \ll X^{1/40}$.

The ensuing analysis then remains valid, with identical or improved bounds on the error terms $E_1, E_2, E_3, E_4$. The restriction
that $q^2 e_1 e_2^2 \ll X^{1/40}$ proves to be the bottleneck, and choosing $Q = X^{1/80} e_1^{-1/2} e_2^{-1}$ completes the proof.
\end{proof}

\begin{remark} For a general set of local conditions $\Sigma \pmod{e}$, analogous results hold with an undetermined 
$e$-dependence in the error term. As the proof shows, this dependence can be computed in terms 
of a modulus for which $W_{q, e}$ can be defined by congruence
conditions. 
\end{remark}

We will also apply the following complementary `tail estimate', which will be contained in the forthcoming article
\cite{bhargava.cojocaru.thorne} by Bhargava, Cojocaru, and the second author:
\begin{theorem}\label{thm:tail}
For any $Y > 1$, define
\begin{equation}
N_5(X, Y) := 
\# \left\{ K \textnormal{ quintic} : \ |\Disc(K)| \leq X, \ q^2 \mid \Disc(K) \textnormal{ for some squarefree } q > Y \right\}.
\end{equation}
Then 
\[
N_5(X, Y) \ll_\varepsilon X^{\frac{39}{40} + \varepsilon} + \frac{X}{Y^{1 - \varepsilon}}.
\] 
\end{theorem}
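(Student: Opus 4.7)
The plan is to bound $N_5(X,Y)$ by a sieve over squarefree $q > Y$, splitting the sum at a threshold $Q$ into a moderate range $Y < q \leq Q$ and a large range $q > Q$. Letting $M(q; X) := \#\{K: q^2 \mid \Disc(K),\ |\Disc(K)| \leq X\}$, a union bound gives
\[
N_5(X, Y) \ll X^\eps \sum_{\substack{q > Y \\ q \textnormal{ squarefree}}} M(q; X),
\]
where the $X^\eps$ factor absorbs overcounting of fields whose discriminant has several large square divisors.

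For the moderate range $Y < q \leq Q$, I would apply Theorem \ref{T:counting} with local conditions at each $p \mid q$ imposing $v_p(\Disc(K)) \geq 2$. Each such condition has local density $O(p^{-2})$ by the quintic mass formula, giving a main term $O(X/q^2)$. Since all these conditions are ramified, Theorem \ref{T:counting} applies with $e_1 = 1$ and $e_2 = q$, so the error per $q$ is $O(q X^{79/80+\eps} + X^{199/200+\eps})$. Summing squarefree $q$ in this range,
\[
\sum_{Y < q \leq Q} M(q; X) \ll X/Y^{1-\eps} + Q^2 X^{79/80+\eps} + Q X^{199/200+\eps},
\]
which already accounts for the main term $X/Y^{1-\eps}$ in the claimed bound.

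For the large range $q > Q$, Theorem \ref{T:counting} is no longer useful, and one needs a direct estimate. A naive approach combines $\#\{n \leq X : q^2 \mid n\} \leq X/q^2$ with the Ellenberg--Venkatesh pointwise bound $\#\{K : \Disc(K) = n\} \ll n^\eps$, yielding $\sum_{q > Q} M(q; X) \ll X^{1+\eps}/Q$; balancing against $Q^2 X^{79/80+\eps}$ gives only $X^{239/240+\eps}$, which is weaker than the claimed $X^{39/40+\eps}$. To reach the sharper exponent, one instead bounds lattice points in Bhargava's parameterizing space $V_\Z = \Z^{40}$ directly: rings with $q^2 \mid \Disc$ correspond to points in a proper sublattice that concentrate deep in the cusp region of the fundamental domain, and a cusp analysis in the spirit of \cite{bhargava.quintic}, tracking the dependence on $q$ uniformly, should produce the required tail bound. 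Choosing $Q$ to balance the two error contributions then yields the claim.

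The main obstacle is exactly this refined large-$q$ estimate: obtaining a geometry-of-numbers bound for $V_\Z$-lattice points corresponding to rings with prescribed discriminant divisibility that is sharp enough to squeeze out the $1/40$ savings over the $X^{79/80}$ of the basic counting theorem. This uniform-in-$q$ cusp analysis is the central technical input of \cite{bhargava.cojocaru.thorne}.
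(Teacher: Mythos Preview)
The paper does not prove Theorem~\ref{thm:tail}; it is quoted as a result of Bhargava, Cojocaru, and Thorne \cite{bhargava.cojocaru.thorne} and used as a black box. There is therefore no proof in the paper to compare your proposal against.

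Your sketch is a reasonable outline of the shape such an argument should take, and you correctly identify that the decisive input---a uniform-in-$q$ geometry-of-numbers bound in the cusp of Bhargava's fundamental domain for $q$ in the large range---is exactly the content of \cite{bhargava.cojocaru.thorne}. Since you defer that step to the same reference the paper does, your proposal is not a self-contained proof any more than the paper's citation is; both rely on \cite{bhargava.cojocaru.thorne} for the substance.

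Two minor remarks on the sketch itself. First, the $X^\eps$ factor in your union bound is superfluous and the stated justification is backwards: overcounting only makes $\sum_{q>Y} M(q;X)$ larger than $N_5(X,Y)$, so $N_5(X,Y) \leq \sum_{q>Y} M(q;X)$ holds with no extra factor. Second, Theorem~\ref{T:counting} as stated in the paper requires the local condition at each $p \neq 5$ to be a single splitting type, whereas ``$v_p(\Disc K) \geq 2$'' is a union of several; you would need to decompose into splitting types and sum, introducing a harmless $O(1)^{\omega(q)} \ll q^\eps$ factor in the error.
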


\section{The genus theory of quintic fields}

Let $K$ be a non-cyclic quintic field.
For $p\neq 5$, write $k(p)$ to denote the unique quintic subfield of $\Q(\zeta_p)$ if such a field exists and $\Q$ otherwise.
Ishida proves that
%for $p\neq 5$,
$Kk(p)/K$ is a nontrivial unramified extension if and only if $p$ is totally ramified in $K$ and
$p\equiv 1\pmod{5}$.

We now consider the case of $p=5$.
Write $k(5)$ to denote the unique quintic subfield of $\Q(\zeta_{25})$.
Ishida shows that $Kk(5)/K$ is a nontrivial unramified extension if and only if $5$ is totally ramified and
$N(\gamma)^{4}\equiv 1\pmod{25}$ for all $\gamma\in\OK$ coprime to $5$.
We will refer to this latter congruence condition as condition~$(\star)$.
%(Note that the extension $Kk(5)/K$ is necessarily nontrivial since $k(5)$ is cyclic and $K$ is non-cyclic.)
Ultimately, Ishida proves the following result
%(see Theorem~5 of~\cite[Chapter 5]{ishida}, pg.~64, and in particular, Equation 5.9, pg.~65).
(see Equation~5.9 of~\cite[Chapter 5]{ishida}, pg.~65).

\begin{theorem}[Ishida]\label{T:ishida}
Let $L$ be a quintic field.
Let $t$ denote the number of primes $p$ such that $p$ is totally ramified in $L$ and $p\equiv 1\pmod{5}$,
and add $+1$ to $t$ if $5$ is totally ramified in $L$ and
%condition $(\star)$ is satisfied.
$N(\gamma)^{4}\equiv 1\pmod{25}$ for all $\gamma\in\mathcal{O}_L$ coprime to $5$.
Then we have:
\[
   g_K=\begin{cases}
   5^{t-1} & \text{if $L$ is cyclic}\\
   5^t & \text{if $L$ is not cyclic}
   \end{cases}
\]
\end{theorem}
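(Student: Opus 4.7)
The plan is to identify the genus field as $K^*=KL$, where $L\subset\Q^{\mathrm{ab}}$ is the maximal absolutely abelian extension of $\Q$ for which $KL/K$ is unramified at all finite primes, and then to determine $L$ and $K\cap L$ explicitly.

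First, I would show that $\Gal(L/\Q)$ is an elementary abelian $5$-group. For any cyclic subfield $N\subseteq L$ of prime-power degree $\ell^k$, a local case analysis at a prime $p$ where $N/\Q$ has full local inertia $\ell^k$, combined with the constraint that splitting types of $p$ in $K$ are partitions of $5$, shows that $\ell^k=5$ is the only possibility and that $p$ must be totally ramified in $K$. Hence $\Gal(L/\Q)\cong(\Z/5\Z)^s$ for some $s$.

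Next, I would identify $L$ with $\prod_{p\in S}k(p)$, where $S$ is the set of primes described in the theorem. Any cyclic degree-$5$ subfield $N\subseteq L$ lies inside some cyclotomic field $\Q(\zeta_m)$ and is either some $k(p)$ or a ``diagonal'' character of several. At each ramified prime $p_i$ of $N$, a local compositum analysis (using that any two cyclic degree-$5$ totally ramified extensions of $\Q_{p_i}$ have unramified compositum over either) shows that unramifiedness of $KN/K$ at $p_i$ is equivalent to unramifiedness of $Kk(p_i)/K$ at $p_i$, which by the two criteria stated above holds iff $p_i\in S$. Thus every such $N$ lies in $\prod_{p\in S}k(p)$, and since $L$ is generated by its cyclic degree-$5$ subfields, $L=\prod_{p\in S}k(p)$, of degree $5^t$.

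Finally, I would compute $g_K=[KL:K]$. In the non-cyclic case, $K$ is not absolutely abelian, so $K\cap L=\Q$ and $g_K=[L:\Q]=5^t$. In the cyclic case, $K$ is itself an absolutely abelian quintic; each of its ramified primes lies in $S$ (automatic for $p\neq 5$ since ramification in a cyclic quintic is total and forces $p\equiv 1\pmod 5$, and for $p=5$ because the unique order-$5$ character of $(\Z/5^a)^\times$ has conductor $25$, so $K$ locally matches $k(5)$ and hence $(\star)$ holds automatically). Thus $K\subseteq L$, whence $g_K=[L:K]=5^{t-1}$.

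The hard part will be the wild local analysis at $p=5$ underlying condition $(\star)$, which requires tracking higher ramification groups and local norm subgroups via local class field theory. Taking that and the tame analysis at $p\neq 5$ as inputs, the remaining global bookkeeping with Galois theory and cyclotomic splitting types is routine.
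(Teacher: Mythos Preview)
The paper does not give its own proof of this theorem; it is quoted from Ishida's monograph \cite[Chapter~5]{ishida}, and the surrounding text only records the two local inputs (the criteria for $Kk(p)/K$ to be unramified when $p\neq 5$ and when $p=5$). So your sketch is not competing with an in-paper argument but rather reconstructing Ishida's theorem from those stated local facts.

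That reconstruction is sound and follows the standard genus-theory template. In Step~2, the key point deserves to be made explicit: once a prime $p$ has inertia $\ell^k$ in $N$, unramifiedness of $KN/K$ forces $\ell^k\mid e(\mathfrak p/p)$ for \emph{every} prime $\mathfrak p$ of $K$ over $p$, and since $\sum_{\mathfrak p\mid p} e(\mathfrak p/p)f(\mathfrak p/p)=5$ this forces $\ell^k\mid 5$. (That a cyclic $N/\Q$ of prime-power degree $\ell^k$ admits some prime of full inertia $\ell^k$ follows from the conductor decomposition of its Dirichlet character.) In Step~3, your parenthetical that any two cyclic degree-$5$ totally ramified extensions of $\Q_{p_i}$ have unramified compositum over either is immediate from Abhyankar's lemma only for $p_i\neq 5$; at $p_i=5$ the same statement is still true, but one needs local class field theory (the image of $\Z_5^\times$ under any pair of such characters is cyclic of order $5$, since $1+5\Z_5$ is procyclic) or the explicit Eisenstein computation the paper cites from \cite{ishida}. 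You correctly flag this wild analysis as the delicate point. Finally, your treatment of the cyclic case is correct: if $5$ is ramified in a cyclic quintic $K$ then $K\otimes\Q_5$ is one of the five Galois quintics over $\Q_5$, which by the paper's discussion preceding Lemma~\ref{5totram} are exactly the extensions satisfying $(\star)$.
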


We wish to reformulate condition $(\star)$ in a manner suitable for our calculations.
Suppose $5$ is totally ramified in $K$.
Let $\mathfrak{p}$ be the unique prime in $K$ above $5$.
It is plain that the congruence condition above involving the norm can be checked in the quintic extension
$K_\mathfrak{p}/\Q_5$ of local fields.  It is equivalent to requiring that
$N(u)^4\equiv 1\pmod{25}$ for all $u\in\mathcal{O}_{\mathfrak{p}}^\times$.
Since every element of $\mathcal{O}_{\mathfrak{p}}^\times$ is a $4$-th root of unity
times an element of the principal units $\mathcal{U}^{(1)}=1+\mathfrak{p}$, it suffices
to check the congruence for all $u\in\mathcal{U}^{(1)}$.

Since $5$ is totally ramified in $K_\mathfrak{p}$ it is possible to choose a generating polynomial
$f(x)=x^5 + a_4x^4 + a_3x^3 +a_2x^2 +a_1x+a_0$ that is Eisenstein at the prime $5$.
A simple calculation (given on~pages 57--59 of~\cite{ishida})
shows that the condition
$N(u)^4\equiv 1\pmod{25}$ for all $u\in\mathcal{U}^{(1)}$
is equivalent to
\[
  a_1 \equiv a_2 \equiv a_3 \equiv a_4+a_0 \equiv 0 \pmod{5^2}
  \,.
\]
(In truth, Ishida gives one direction of this claim, but the argument is easily reversible.)
There are $25$ quintic ramified extensions of $\Q_5$.  See~\cite{jones.roberts, amano}.  For each of these, we use the generating polynomial given in the Jones--Roberts database
(now incorporated into the LMFDB \cite{lmfdb})
 to check condition ($\star)$.
We find that precisely $5$ of these $25$ extensions satisfy the condition.
Generating polynomials for these extensions are given by
$x^5-5x^4+5(1+5a)$ for $0\leq a \leq 4$.
Moreover, these are precisely the Galois extensions.
We mention in passing that all of these extensions have discriminant $5^8$.
We have thus proved the following two results.

\begin{lemma}\label{5totram}
Suppose $K$ is a non-cyclic quintic field where $5$ is totally ramified.
Let $k(5)$ be the unique quintic subfield of $\Q(\zeta_{25})$.
We have that $Kk(5)/K$ is unramified if and only if
$K\otimes \Q_5$
is a Galois extension of $\Q_5$.
%Moreover, if $Kk(5)/K$ is unramified, we have $v_5(\Delta_K)=8$, but not conversely.
\end{lemma}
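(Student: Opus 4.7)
The plan is to combine Ishida's condition $(\star)$ with a local computation at $5$ and then a finite enumeration of the ramified quintic extensions of $\Q_5$.

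First I would invoke Ishida's theorem: $Kk(5)/K$ is unramified if and only if $(\star)$ holds, namely $N(\gamma)^4\equiv 1\pmod{25}$ for every $\gamma\in\OK$ coprime to $5$. Since $5$ is totally ramified there is a unique prime $\mathfrak{p}\mid 5$, and the congruence depends only on the image of $\gamma$ in $\mathcal{O}_{\mathfrak{p}}^{\times}$, so the condition localizes to $N_{K_{\mathfrak{p}}/\Q_5}(u)^{4}\equiv 1\pmod{25}$ for all $u\in\mathcal{O}_{\mathfrak{p}}^{\times}$. Because $K_{\mathfrak{p}}/\Q_5$ has residue degree $1$, the Teichm\"uller lifts of $\mathbb{F}_5^{\times}$ already lie in $\Z_5$, so $\mathcal{O}_{\mathfrak{p}}^{\times}\cong\mu_4\times\mathcal{U}^{(1)}$ and for $\zeta\in\mu_4$ one has $N(\zeta)=\zeta^5=\zeta$, which already satisfies $\zeta^4=1$. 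Hence it suffices to verify the condition on the principal units $\mathcal{U}^{(1)}=1+\mathfrak{p}$.

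Next, I would reduce to an explicit condition on an Eisenstein generator. Choose $f(x)=x^5+a_4x^4+a_3x^3+a_2x^2+a_1x+a_0$ Eisenstein at $5$ with root $\pi$, a uniformizer of $K_{\mathfrak{p}}$. A general $u\in\mathcal{U}^{(1)}$ can be written $u\equiv 1+c_1\pi+c_2\pi^2+c_3\pi^3+c_4\pi^4\pmod{\pi^5}$ with $c_i\in\Z_5$; using $\pi^5\equiv-(a_0+a_1\pi+\cdots+a_4\pi^4)$ to rewrite higher powers of $\pi$, one can expand $N(u)=\prod_\sigma\sigma(u)$ as an explicit polynomial in the $a_i$ and $c_i$ modulo $25$. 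The computation given on pages 57--59 of Ishida then identifies the condition ``$N(u)^4\equiv 1\pmod{25}$ for every admissible choice of $c_i$'' with the system
\[
a_1\equiv a_2\equiv a_3\equiv a_4+a_0\equiv 0\pmod{25}.
\]
This $\pi$-adic bookkeeping is the main technical obstacle: one must extract the coefficient of each monomial in the $c_i$ to obtain the correct list of necessary congruences on the $a_j$, and then separately verify that these together are sufficient.

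Finally, I would close the argument by a finite check. By the classification of totally ramified quintic extensions of $\Q_5$ (Amano, tabulated by Jones--Roberts) there are exactly $25$ such extensions, each with a listed generating polynomial. Testing the displayed coefficient condition against each, exactly $5$ should pass, namely those with generating polynomial $x^5-5x^4+5(1+5a)$ for $0\leq a\leq 4$. A direct inspection (or the Galois data recorded in the database) identifies these $5$ extensions as precisely the Galois ones, yielding the stated equivalence that $Kk(5)/K$ is unramified iff $K\otimes_\Q\Q_5/\Q_5$ is Galois.
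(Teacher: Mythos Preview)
Your proposal is correct and follows essentially the same approach as the paper: localize Ishida's condition $(\star)$ to $K_{\mathfrak{p}}/\Q_5$, reduce to principal units, cite Ishida's computation on pages 57--59 for the Eisenstein-coefficient criterion $a_1\equiv a_2\equiv a_3\equiv a_4+a_0\equiv 0\pmod{25}$, and then run through the $25$ totally ramified quintic extensions of $\Q_5$ in the Jones--Roberts database to find that exactly the $5$ Galois ones satisfy it. Your write-up actually supplies slightly more justification at a couple of points (e.g., why the Teichm\"uller lifts already satisfy the norm condition) than the paper does.
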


\begin{lemma}\label{5mass}
The mass of all totally ramified quintic Galois extensions of $\Q_5$ is equal to
\[
  \sum_{\substack{[K:\Q_5]=5 \text{ Galois,}\\\text{ totally ramified}}}  
    \frac{1}{\Disc_5(K)}\frac{1}{\#\Aut(K)}
  =\frac{1}{5^8}
  \,.
\]
\end{lemma}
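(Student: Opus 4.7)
The plan is to observe that Lemma~\ref{5mass} is an immediate consequence of the enumeration carried out in the paragraphs preceding it. In the discussion of condition $(\star)$, the authors identified, by consulting the Jones--Roberts database of local fields, exactly $25$ totally ramified quintic extensions of $\Q_5$, and singled out the $5$ among them satisfying condition $(\star)$, with explicit generating polynomials $x^5-5x^4+5(1+5a)$ for $0\le a\le 4$. They further noted that these $5$ extensions are precisely the Galois ones and that each has discriminant $5^8$. So the enumeration we need is already in hand.

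Given this, I would simply assemble the sum. Since each of the $5$ extensions $K/\Q_5$ in question is Galois of prime degree~$5$, it is automatically cyclic, and hence $\Aut(K) = \Gal(K/\Q_5)$ has order $5$. Combined with $\Disc_5(K) = 5^8$, each extension contributes
\[
  \frac{1}{\Disc_5(K)}\cdot\frac{1}{\#\Aut(K)} = \frac{1}{5^8}\cdot\frac{1}{5} = \frac{1}{5^9}
\]
to the mass, and summing over the $5$ extensions gives $5\cdot 5^{-9} = 5^{-8}$, as claimed.

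There is essentially no obstacle here — all of the arithmetic work (exhibiting the list of Galois totally ramified quintic extensions of $\Q_5$ and verifying the discriminant $5^8$) is done in the lead-up to Lemma~\ref{5mass}; the only remaining content is the automorphism count, which is immediate since any Galois extension of prime degree is cyclic. If one wished to verify $\Disc_5(K) = 5^8$ independently of the database, one could compute it directly from the Eisenstein polynomial $f(x) = x^5 - 5x^4 + 5(1+5a)$ via the resultant formula $\Disc(f) = (-1)^{10}\mathrm{Res}(f,f')/\mathrm{lc}(f)$, or by computing $v_5(f'(\pi))$ at a uniformizer $\pi$; this would be the only mildly nontrivial step if one were avoiding reliance on \cite{jones.roberts, amano}.
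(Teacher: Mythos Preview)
Your proposal is correct and matches the paper's approach exactly: the lemma is presented in the paper as a direct consequence of the preceding enumeration (the five Galois totally ramified quintic extensions of $\Q_5$, each of discriminant $5^8$), and you have simply made explicit the arithmetic $5\cdot\frac{1}{5^8}\cdot\frac{1}{5}=\frac{1}{5^8}$ together with the observation that $\#\Aut(K)=5$ for a degree-$5$ Galois extension.
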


%\begin{proof}
%First note that $5$ being totally ramified limits one to 25 possible minimal polynomials for generators of degree $5$ extension of $\Q_5$ in \cite{jones.roberts}. Ishida's criterion $(*)$ is satisfied by only $5$ of those polynomials, namely $x^5-5x^4+5(1+5a)$ for $0\leq a \leq 4$.
%\end{proof}

\section{Proofs of Theorems~\ref{T:1},~\ref{T:2},~\ref{T:3},~\ref{T:4}.}

Recall that $\mathcal{F}$ denotes the collection of all quintic fields $K$,
and that $\mathcal{G}$ denotes the collection of all quintic fields with $g_K=1$.

\begin{proof}[Proof of Theorem~\ref{T:1}]
Theorem \ref{T:ishida} and Lemma \ref{5totram}
establish that $\mathcal{G}$, the set of non-cyclic quintic fields $K$ 
with genus number $g_K = 1$, consists precisely of those $K$ satisfying the following local conditions:
\begin{itemize}
\item No prime $p \equiv 1 \pmod{5}$ is totally ramified in $K$.
\item Either $5$ is not totally ramified in $K$, or $K\otimes \Q_5$ is not a Galois field extension of $\Q_5$.
\end{itemize}

We count these fields (and hence prove Theorem~\ref{T:1}) by means of an inclusion-exclusion sieve, adapting the approach of
Belabas, Bhargava, and Pomerance in \cite{belabas.bhargava.pomerance}. Throughout this section, we ignore the cyclic 
quintic fields; by Theorem 1.1 of \cite{CoDiOl3} there are $\sim c X^{1/4}$ of them with discriminant bounded by $X$, and hence they
do not contribute to any of our asymptotics. Alternatively, one may exclude all of the non-$S_5$ quintic fields, of which there
are $\ll X^{39/40 + \epsilon}$ by 
%\cite{shankar.tsimerman.s5} and 
\cite{bhargava.cojocaru.thorne}.

For each quintic field $K$, let $f(K)$ denote the product of
the primes $p \equiv 0, 1 \pmod{5}$ that are totally ramified in $K$, with the
additional condition for $p = 5$ that $K\otimes \Q_5$ be a Galois field extension of $\Q_5$.
Then $\mathcal{G}^{(i)}$ is the set of all $K$ in $\mathcal{F}^{(i)}$
for which $f(K) = 1$.
Recall that $N_{g}^{(i)}(X)$ denotes the counting function for $\mathcal{G}^{(i)}$.

Let $T$ denote the collection of all positive squarefree $f$ whose prime divisors $p$ all satisfy
$p \equiv 0, 1\pmod{5}$. For each $f \in T$, write $\Sigma_f$
for the set of local conditions specifying that
$f \mid f(K)$.
Let $\mathcal{A}^{(i)}_f$ be the set of fields in $\mathcal{G}^{(i)}$ satisfying the conditions $\Sigma_f$, and
observe that
$$
  \mathcal{G}^{(i)}=\mathcal{F}^{(i)}\setminus
  \bigcup_{p\equiv 0,1\pmod{5}} \mathcal{A}^{(i)}_p
  \,.
$$
Applying inclusion-exclusion
it follows, for an arbitrary parameter $Y < X$, that 
\begin{align}
N_{g}^{(i)}(X) & = \sum_{f\in T} \mu(f) N^{(i)}(X;\Sigma_f) \label{eq:ng1} \\
& = \sum_{\substack{f\in T \\ f \leq Y}} \mu(f) N^{(i)}(X;\Sigma_f) + \nonumber
 \sum_{\substack{f\in T \\ f > Y}} \mu(f) N^{(i)}(X;\Sigma_f).
 \end{align}
As $f^4 \mid \Disc(K)$ for each $K$ satisfying $\Sigma_f$, the sum over $f > Y$ is handled by the tail estimate of Theorem \ref{thm:tail}. Each field $K$ 
is counted with multiplicity at most
$d(\Disc(K)) = O(X^{\eps})$ in this sum (where $d(n)$ is the number of positive divisors of $n$), so that
we have
\begin{equation}\label{eq:tail}
 \sum_{\substack{f\in T \\ f > Y}} \mu(f) N^{(i)}(X;\Sigma_f)
 \ll X^{39/40 + \eps} + X^{1 + \eps}/Y.
 \end{equation}

Define $m^*(p)$ to be $m(p)$ for $p \neq 5$, with $m^*(5) = 5^4 m(5)$.
For each $f \leq Y$, upon 
applying Theorem~\ref{T:counting} with Lemma~\ref{5mass} we obtain
\begin{align*}
N^{(i)}(X;\Sigma_f)&=\left( C^{(i)} \prod_{p|f}m^*(p)^{-1}p^{-4} \right) \cdot X + O(X^{199/200 + \eps})%\\
%N(X;\Sigma'_f)&=C^{(i)}(\Sigma'_f)X + O(f^A X^{1-\delta})
\,,
\end{align*}
provided that $Y \leq X^{\frac{199}{200} - \frac{79}{80}} = X^{\frac{3}{400}}$,
so that 
\begin{equation}\label{eq:avg1}
\sum_{\substack{f\in T \\ f \leq Y}} \mu(f) N^{(i)}(X;\Sigma_f)
=
C^{(i)}
X
\sum_{\substack{f\in T \\ f \leq Y}} \left( \prod_{p|f}-m^*(p)^{-1}p^{-4} \right) + O\left(Y X^{199/200 + \eps}\right).
\end{equation}
In the main term we extend the sum over $f \leq Y$ to all $f \in T$, at the expense of an error term
$\ll X Y^{-3}$, since each product over $p \mid f$ has absolute value $\leq f^{-4}$. We thus have that
\begin{align*}
\sum_{\substack{f\in T \\ f \leq Y}} \mu(f) N^{(i)}(X;\Sigma_f)
& =
C^{(i)}
 X
\sum_{\substack{f\in T}} \left( \prod_{p|f}-m^*(p)^{-1}p^{-4} \right) + O\left( Y X^{199/200 + \eps} + X Y^{-3} \right).
%\\
%& =
%C^{(i)}
%\left(1 - \frac{1}{5^{8}m(5)} \right) X
%\prod_{\substack{p \neq 5 \\ p \not \equiv 1 \pmod{5}}} \left( 1-m(p)^{-1}p^{-4} \right) + O\left( Y^{A + 1} X^{1 - \delta} + X Y^{-3} \right).
\end{align*}

Choosing $Y = X^{\frac{1}{400}}$, we see that the error terms above and in \eqref{eq:tail} are all $O(X^{1 -\frac{1}{400} + \eps})$.
We therefore have that
\begin{align*}
N_{g}^{(i)}(X)& =
C^{(i)}
X
\sum_{\substack{f\in T}} \left( \prod_{p|f}-m^*(p)^{-1}p^{-4} \right) + O\left( X^{1 - \frac{1}{400} + \eps} \right)
\\
& =
C^{(i)} X \cdot
\left(1 - \frac{1}{5^{8}m(5)} \right)
\prod_{ p \equiv 1 \pmod{5}} \left( 1-m(p)^{-1}p^{-4} \right) + O\left( X^{1 - \frac{1}{400} + \eps} \right),
\end{align*}
which is what we wanted to prove.
\end{proof}

\begin{proof}[Proof of Theorem~\ref{T:2}]

For a parameter $Z$, let $U$ denote any set of $k$ primes $p\equiv 0, 1\pmod{5}$ with $p \leq Z$,
and write $T(U)$ for the set of squarefree $f$ whose prime factors $p$ all satisfy 
$p \equiv 0, 1 \pmod{5}$ and $p \not \in U$.

We modify the proof of Theorem \ref{T:1} by replacing $T$ with $T(U)$, and by adding the 
condition throughout that each $p \in U$ totally ramify and that $K \otimes \Q_5$ be Galois
if $5 \in U$. All such fields have genus number $g_K = 5^k$, and by an identical argument the
number of such fields $K$ with $|\Disc(K)|\leq X$ is
\begin{equation*}
C^{(i)}
\left( \prod_{\substack{ p \in T(U)}} \left( 1-m^*(p)^{-1}p^{-4} \right) \right) \cdot
\left( \prod_{p \in U} m^*(p)^{-1} p^{-4} \right) X 
+ O_{Z}\left( X^{1 - \frac{1}{400} + \eps} \right).
\end{equation*}
Adding over all choices of $U$ with each $p \leq Z$, we obtain an expression of the form $C_k^{(i)}(Z) X + O_{Z}\big( X^{1 - \frac{1}{400} + \eps} \big)$,
for a sequence $C_k^{(i)}(Z)$ which increases with $Z$ and is bounded above by $C^{(i)}$, and which therefore converges to a fixed constant  $C^{(i)}_k$.

This counts all quintic fields with $|\Disc(K)| < X$, with the specified signature, and with $g_K = 5^k$ -- with the exception of those for which any prime $p > Z$
is totally ramified. By Theorem~\ref{thm:tail}, the number of such is $\ll X^{\frac{39}{40} + \varepsilon} + X/Z^{1 - \epsilon}$. Therefore, letting $Z \rightarrow \infty$,
we see that the total number of fields being counted is $C^{(i)}_k X + o(X)$.
\end{proof}

%\begin{proof}[(--- proof of average genus number ---)] Write 
%\begin{equation}
%f^*(K)
%= 
%\begin{cases}
%5f(K) & \textnormal{if $5$ is totally ramified in $K$ and $K \otimes \Q_5$ is a Galois extension of $\Q_5$}, \\
%f(K) & \textnormal{otherwise},
%\end{cases}
%\end{equation}
%so that by construction $g_K = 5^{\omega(f^*(K))}$. Then we also have
%\[
%g_K = \sum_{d \mid f^*(K)} 4^{\omega(d)},
%\]
%In analogy with \eqref{eq:ng1}, it follows that
%\[
%\sum_{\substack{K\in\mathcal{G}^{(i)} \\|\Disc(K)|\leq X\}}} g_K = 
%\sum_{f \in T \cup 5T} 4^{\omega(f)} N^{(i)}(X; S^*_f),
%\]
%where $\Sigma^*_f$ is defined analogously to $\Sigma_f$, but specifies that 
%$5$ is totally ramified in $K$ and $K \otimes \Q_5$ is a Galois extension of $\Q_5$ when $5 \mid f$,
%and makes no specification at $5$ otherwise. In direct analogy to \eqref{eq:avg1}, we thus obtain that
%\begin{equation}\label{eq:avg2}
%\sum_{\substack{f\in T \cup 5T \\ f \leq Y}} 4^{\omega(f)} N^{(i)}(X;\Sigma^*_f)
%=
%C^{(i)}
%\left(1 + \frac{4}{5^{8}m(5)} \right)
%X
%\sum_{\substack{f\in T \\ f \leq Y}} \left( \prod_{p|f}4m(p)^{-1}p^{-4} \right) + O\left(Y X^{199/200 + \eps}\right).
%\end{equation}
%All error terms, including the analogous sum over $f > Y$, satisfy all of the same bounds since $4^{\omega(f)} \ll X^{\epsilon}$
%for $f \leq X$. We therefore conclude that 
%\[
%\sum_{\substack{K\in\mathcal{G}^{(i)} \\|\Disc(K)|\leq X\}}} g_K = 
%C^{(i)}
%\left(1 + \frac{4}{5^{8}m(5)} \right) X
%\prod_{ p \equiv 1 \pmod{5}} \left( 1+4m(p)^{-1}p^{-4} \right) + O\left( X^{1 - \frac{1}{400} + \eps} \right),
%\]
%as desired.
%\end{proof}

\begin{proof}[Proof of Theorem~\ref{T:3}]
Let $\omega(n)$ denote the number of prime divisors of $n$,
and again let $T$ denote the collection of all positive squarefree $f$
whose prime divisors $p$ all satisfy $p \equiv 0, 1\pmod{5}$.

By construction we have $g_K = 5^{\omega(f(K))}$, with $f(K)$ defined as before, and hence also
\[
g_K = \sum_{f \mid f(K)} 4^{\omega(f)},
\]
so that in analogy with \eqref{eq:ng1} we have
\[
\sum_{\substack{K\in\mathcal{G}^{(i)} \\|\Disc(K)|\leq X}} g_K = 
\sum_{f \in T} 4^{\omega(f)} N^{(i)}(X; \Sigma_f).
\]
This sum is evaluated exactly as in the proof of Theorem \ref{T:1}, with $- m^*(p)$ replaced by $4 m^*(p)$ at every occurrence. 
All of the error terms satisfy identical bounds up to a factor of $O(X^{\epsilon})$ since $4^{\omega(f)} \ll X^{\epsilon}$
for $f \leq X$. We therefore conclude that 
\begin{equation}\label{eq:average_genus}
\sum_{\substack{K\in\mathcal{G}^{(i)} \\|\Disc(K)|\leq X}} g_K = 
C^{(i)}
\left(1 + \frac{4}{5^{8}m(5)} \right) X
\prod_{ p \equiv 1 \pmod{5}} \left( 1+4m(p)^{-1}p^{-4} \right) + O\left( X^{1 - \frac{1}{400} + \eps} \right),
\end{equation}
as desired.
\end{proof}

\begin{proof}[Proof of Theorem~\ref{T:4}]
Let $\Sigma$ denote the local conditions that
$2$ is inert, $5$ is inert, $7$ is totally ramified,
and no prime $p\equiv 1\pmod{5}$ is totally ramified.
Observe that any quintic field $K$ satisfying the conditions $\Sigma$ must have genus
number one.
By way of contradiction, suppose $K$ is norm-Euclidean and satisfies the conditions $\Sigma$.
Let $\mathfrak{p}$ denote the unique prime lying over $7$.
Then there exists $\alpha\in\OK$
such that
$4\equiv \alpha\pmod{\mathfrak{p}}$
with
 $|N(\alpha)|<|N(\mathfrak{p})|=7$.
It follows that $2\equiv 4^5\equiv N(\alpha)\pmod{7}$ and therefore
$N(\alpha)\in\{2,-5\}$.  We are forced to conclude that either
$2$ or $5$ is not inert, a contradiction.
The result now follows from techniques similar to the proof of Theorem~\ref{T:1}.
\end{proof}

\section*{Acknowledgments}

We would like to thank Arul Shankar, Melanie Matchett Wood, and several anonymous referees for helpful comments.

This work was supported by a grant from the Simons Foundation (No. 586594, F.T.)
and an internal research grant through California State University, Chico.

\bibliographystyle{alpha}

\vspace{1ex}
\footnotesize{
\noindent
Kevin J. McGown\\
Department of Mathematics and Statistics\\
California State University, Chico\\
%Holt 181, 400 West First Street\\
Chico, CA 95929\\
\emph{E-mail address:} {\tt kmcgown@csuchico.edu}\\[1ex]

\noindent
Frank Thorne\\
Department of Mathematics\\
University of South Carolina\\
%Holt 181, 400 West First Street\\
Columbia, SC 29208\\
\emph{E-mail address:} {\tt thorne@math.sc.edu}\\[1ex]

\noindent
Amanda Tucker\\
Department of Mathematics\\
University of Rochester\\
Rochester, NY 14627\\
\emph{E-mail address:} {\tt amanda.tucker@rochester.edu}
}

\end{document}